\begin{document}

\newtheorem{theorem}{Theorem}[section]
\newtheorem{corollary}[theorem]{Corollary}
\newtheorem{definition}[theorem]{Definition}
\newtheorem{proposition}[theorem]{Proposition}
\newtheorem{lemma}[theorem]{Lemma}
\newtheorem{example}[theorem]{Example}
\newtheorem{algorithm}[theorem]{Algorithm}
\newenvironment{proof}{\noindent {\bf Proof.}}{\rule{3mm}{3mm}\par\medskip}
\newcommand{\remark}{\medskip\par\noindent {\bf Remark.~~}}
\title{Multiplication of polynomials over the binary field}
\author{Chunlei Liu\footnote{Shanghai Jiao Tong Univ., Shanghai 200240, clliu@sjtu.edu.cn.}}
\date{}
\maketitle
\thispagestyle{empty}

\abstract{Additive Fourier Transform for polynomials over the binary field is studied. The technique of Gao-Mateer is generalized, giving an algorithm which compute the product of two polynomials of degree $<n$ over the binary field in
 $O(n\log n(\log\log n)^2)$ bit operations.}

\noindent {\bf Key words}:  finite field, polynomial multiplication, evaluation, interpolation, Additive Fourier Transform.

\section{\small{INTRODUCTION}}
\hskip .2in
Fast multiplication of polynomials is an important problem in computational number theory, computer algebra, algebraic coding theory and in cryptography.
In fast, fast multiplication of polynomials gives rise to fast division of polynomials, and thus gives rise to fast Generalized Euclidean Algorithm of polynomials \cite{AHU74,Kn81}. Similarly, fast Midway Euclidean Algorithm of polynomial, which plays an equal role as Berlekamp-Massey Algorithm \cite{Be68, Ma69, Mc78}.

Researches on Fast Polynomial Multiplication have given birth to  ingenious ideas as well as beautiful results. Contributors to these ideas are Karatsuba-Ofman \cite{KO63}, Cooley-Tukey \cite{CT65}, Wang-Zhu\cite{WZ88}, Gathen-Gerhard \cite{GG96},  Sch\"{o}nhage \cite{Sch77}, Cantor \cite{Ca89}, and etc.

In 1965, Cooley-Tukey invented the Fast Fourier Transform to get a  Fast Polynomial Multiplication Algorithm. In 1977, Sch\"{o}nhage invented the Number Theoretic Transform to get a  Fast Polynomial Multiplication Algorithm.
And, Wang-Zhu and Cantor invented the Additive Fourier Transform to get a  fast polynomial multiplication algorithm.

In the Additive Fourier Transform approach to fast polynomial multiplication, a fast evaluation algorithm is needed. In 1996, Gao-Matter \cite{GM10} introduced an evaluation algorithm, which is efficient for polynomials whose degree is of the form $2^{2^l}$.
We shall generalize the method of Gao-Mateer to prove the following theorem.
\begin{theorem}\label{main}There exists a polynomial multiplication  algorithm  which computes the product of two polynomials of degree $<n$ over ${\rm GF}(2)$ in
 $O(n\log n(\log\log n)^2)$ bit operations.\end{theorem}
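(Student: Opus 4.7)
The plan is to reduce the multiplication of two $\mathrm{GF}(2)$-polynomials of degree $<n$ to evaluation/interpolation over a $\mathrm{GF}(2)$-subspace of a suitable binary extension field via a generalized Additive Fourier Transform (AFT). Concretely, I would pick $k=\lceil\log_{2}(2n-1)\rceil$, work in $K=\mathrm{GF}(2^{k})$, and let $W\subseteq K$ be a $\mathrm{GF}(2)$-subspace of dimension $k$ (so $W=K$). Since $\deg(fg)<2n-1\leq|W|$, the product $fg$ is determined by its values on $W$, so the algorithm runs in three phases: evaluate $f$ and $g$ at $W$, multiply pointwise, and interpolate.

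The core technical step is a generalization of the Gao--Mateer evaluation algorithm to arbitrary subspace dimensions. Given a hyperplane $V\subset W$, the subspace polynomial $s_{V}(x)=\prod_{v\in V}(x-v)$ is $\mathrm{GF}(2)$-linear of degree $|V|$, and every polynomial $f$ with $\deg f<|W|$ admits a unique Taylor-like decomposition $f(x)=f_{0}(s_{V}(x))+x\,f_{1}(s_{V}(x))$ with $\deg f_{i}<|V|$. The values of $f_{i}(s_{V}(x))$ depend only on the coset of $V$ in $W$ (there are two), and $f_{i}$ itself needs only be evaluated on $s_{V}(W)$, a subspace of dimension $k-1$. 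Recursing yields the AFT. The Gao--Mateer restriction to degrees of the form $2^{2^{l}}$ stems from insisting on choices of $V$ that make $s_{V}(W)$ a canonical subspace; I would remove it by allowing general $V$ at each recursive level, losing only constant factors in the operation count while permitting arbitrary $k$. The output is an evaluation procedure using $O(n\log n)$ operations in $K$, and a symmetric procedure for interpolation.

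The final step is the conversion to bit operations. An addition in $K$ costs $k=O(\log n)$ bit operations, while a multiplication in $K$ reduces (via polynomial-basis representation) to multiplying two $\mathrm{GF}(2)$-polynomials of degree $<k$ followed by reduction modulo a fixed irreducible polynomial. Applying the theorem itself recursively on inputs of size $O(\log n)$, one obtains a recurrence for $M_{2}(n)$ whose intended closed form is $M_{2}(n)=O(n\log n(\log\log n)^{2})$. Verifying that this recurrence actually closes is the main obstacle: a naive count of additions already threatens $\Theta(n(\log n)^{2})$ bit operations, so the recurrence only balances if the constants in the AFT are apportioned carefully between additions and multiplications, and if the recursive invocation is applied at the granularity of $k=\Theta(\log n)$ (so that $M_{2}(\log n)=O(\log n\cdot\log\log n\cdot(\log\log\log n)^{2})$ controls the inner cost). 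The bulk of the paper's work will therefore consist in (i) making the Gao--Mateer decomposition available for every $k$ with a uniform constant, and (ii) arranging the outer/inner split so that the recurrence indeed evaluates to $O(n\log n(\log\log n)^{2})$.
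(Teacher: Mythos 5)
Your overall architecture (evaluate, multiply pointwise, interpolate via an additive FFT over a binary extension field, with field multiplications handled by invoking the theorem recursively on inputs of size $\Theta(\log n)$) matches the paper's, but a key ingredient is missing, and the obstacle you flag at the end is not a matter of ``apportioning constants between additions and multiplications'': it is fatal to your setup. You take the transform length to be $2^k\approx 2n$ over a field of $k\approx\log_2 n$ bits, embedding each $\mathrm{GF}(2)$-coefficient as a separate field element. Every additive FFT in play here, including the paper's, uses $\Theta(N\log N)$ field additions or more for a length-$N$ transform, so with $N\approx 2n$ you are locked into $\Omega(n\log n\cdot\log n)=\Omega(n(\log n)^2)$ bit operations before a single multiplication is counted. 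The paper escapes this with a packing step (Section 5, the ``special multiplication mechanism''): the input polynomials are cut into roughly $n/\log n$ blocks of roughly $\log n$ consecutive coefficients, each block is read as one element of $\mathrm{GF}(2^{2^L})$ with $2^L=\Theta(\log n)$, and the transform is applied to the packed polynomials, of length $N\approx n/\log n$. The transform then performs only $O(N\log N)=O(n)$ field multiplications and $O(N\log N\log\log N)=O(n\log\log n)$ field additions, which is exactly what makes the target bound reachable. Without this segmentation your recurrence cannot close.

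A second, related problem is the decomposition itself. As written, $f=f_0(s_V(x))+x\,f_1(s_V(x))$ with $V$ a hyperplane does not typecheck: $\deg f_0(s_V(x))$ would be about $|V|^2\gg|W|$. What you presumably intend is the radix-2 Taylor expansion at a degree-2 subspace polynomial (or, dually, division with remainder by a degree-$|W|/2$ subspace polynomial). Either radix-2 variant yields $\Theta(N(\log N)^2)$ additions, not $O(N\log N\log\log N)$, so even after packing you would land at $O(n(\log n)^2)$ bit operations. The point of Gao--Mateer, and of the paper's generalization, is to expand $f$ at $x^{2^{2^{l-1}}}-x$ with $l=\lceil\log_2 m\rceil$, splitting the dimension $m$ of the evaluation space into two nearly equal parts $2^{l-1}$ and $m-2^{l-1}$ (so that $m$ need not be a power of two); the resulting balanced recurrence $E_1(m)/2^m\le E_1(2^{l-1})/2^{2^{l-1}}+E_1(m-2^{l-1})/2^{m-2^{l-1}}+(m-2^{l-1})$ is what brings the addition count down to $2^m m(1+\log_2 m)$. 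Your plan needs both the packing step and this balanced splitting; with only the pieces you describe, the bound you would actually prove is $O(n(\log n)^2)$ or worse.
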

Notice that, Wang-Zhu's algorithm may yield the bit complexity
$O(n(\log_2 n)^2)$,  Cantor's algorithm yields the bit complexity
$O(n(\log_2 n)^{\log_23/2})$,  and Gao-Mateer's algorithm may yields the bit complexity
$O(n(\log_2 n)(\log\log n)^2)$ when $n$ is of the form $2^{2^l}$.
\section{Evaluation and Remaindering}
\paragraph{}In this section we generalize the evaluation algorithm of Gao-Mateer \cite{GM10}. We shall split it into an evaluation algorithm ${\rm EA}(f,a,m,L)$ and a remaindering algorithm ${\rm RA}(f,b,m,l,L)$.
We begin with the following definition.
\begin{definition}
Let $L$ be a positive integer. A basis $(\beta_1,\cdots,\beta_{2^L})$ of ${\rm GF}(2^{2^L})$ over ${\rm GF}(2)$ is called a Cantor basis if
$\beta_1=1$, and $\beta_i^2-\beta_i=\beta_{i-1}$ for all $i>1$.\end{definition}
Let $(\beta_1,\cdots,\beta_{2^L})$ be a Cantor basis of ${\rm GF}(2^{2^L})$ over ${\rm GF}(2)$. We write
$$W_i=\sum_{j=1}^i{\rm GF}(2)\beta_j,\ i=1,\cdots,2^L.$$
One can show that $W_i$ is independent of the choice of the Cantor basis.
The basis $(\beta_1,\cdots,\beta_{2^L})$ certainly gives an isomorphism from ${\rm GF}(2)^{2^L}$ to ${\rm GF}(2^{2^L})$, and thus the lexicographic order in  ${\rm GF}(2)^{2^L}$ induces an order in ${\rm GF}(2^{2^L})$.
Under this induced order, elements of ${\rm GF}(2^{2^L})$ can be ordered increasingly as $\varpi_0,\varpi_1,\cdots,\varpi_{2^{2^L}-1}$. That is,
$$\varpi_0<\varpi_1<\cdots<\varpi_{2^{2^L}-1}.$$
The tasks of  ${\rm EA}(f,a,m,L)$ and ${\rm RA}(f,b,m,l,L)$ are described in the following tables.
\vskip0.2in
\begin{tabular}{ll}
  &Task of ${\rm EA}(f,a,m,L)$\\
\hline
 input 1  & $L$: a number;\\
 input 2  & $m$: a number  $\leq 2^L$;\\
input 3& $a$: an element in $W_{2^L}\backslash W_m$;\\
input 4&$f$: a polynomial over ${\rm GF}(2^{2^L})$ of degree $<2^m$;\\
install & $(\beta_1,\cdots,\beta_{2^L})$: a Cantor basis of ${\rm GF}(2^{2^L})$ over ${\rm GF}(2)$;\\
   output& $(v_0,\cdots,v_{2^m-1})$: $v_i=f(a+\varpi_i)$, $i=0,\cdots,2^m-1$;\\
\hline
\end{tabular}
\vskip0.2in

\vskip0.2in
\begin{tabular}{ll}
&Task of ${\rm RA}(f,b,m,l,L)$\\
\hline
 input 1  & $L$: a number;\\
 input 2  & $l$: a number  $\leq L$;\\
 input 3  & $m$: a number  satisfying $2^{l-1}<m\leq 2^l$;\\
input 4& $b$: an element in $W_{2^L}\backslash W_{m-2^{l-1}}$;\\
input 5&$f$: a polynomial over ${\rm GF}(2^{2^L})$ of degree $<2^m$;\\
install & $(\beta_1,\cdots,\beta_{2^L})$: a Cantor basis of ${\rm GF}(2^{2^L})$ over ${\rm GF}(2)$;\\
   output& $(r_0,\cdots,r_{2^{m-2^{l-1}}-1})$: $r_j$ is the remainder of $f$\\
& modulo $x^{2^{2^{l-1}}}-x-(b+\varpi_j)$;\\
\hline
\end{tabular}
\vskip0.2in

We now describe the mechanisms of the evaluation algorithm and the remaindering algorithm.
\begin{lemma}[evaluation mechanism]
Let $L,m$ be numbers with  $1<m\leq 2^L$, and $l=\lceil\log_2m\rceil$. Let $a\in W_{2^L}\backslash W_m$, and $b=a^{2^{2^{l-1}}}-a$.
Let $f$ be a polynomial over ${\rm GF}(2^{2^L})$ of degree $<2^m$.
Let $0\leq i<2^{l-1}, 0\leq j<2^{m-2^{l-1}}$,  $a_j=a+\varpi_{j2^{2^{l-1}}}$,
and $r_j$ the remainder of $f$ modulo $x^{2^{2^{l-1}}}-x-(b+\varpi_j)$.
Then
$$f(a+\varpi_{i+j2^{2^{l-1}}})=r_j(a_j+\varpi_i).$$
\end{lemma}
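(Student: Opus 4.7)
The plan is to split the identity into two pieces: (a) the arguments themselves agree, $a+\varpi_{i+j\cdot 2^{2^{l-1}}}=a_j+\varpi_i$; and (b) this common point is a root of the modulus polynomial $p_j(x):=x^{2^{2^{l-1}}}-x-(b+\varpi_j)$. Because $r_j$ is the remainder of $f$ modulo $p_j$, (a) and (b) immediately give $f(a+\varpi_{i+j\cdot 2^{2^{l-1}}})=f(a_j+\varpi_i)=r_j(a_j+\varpi_i)$. Fact (a) is purely combinatorial: the induced order on $\mathrm{GF}(2^{2^L})$ is chosen so that $W_m=\{\varpi_0,\ldots,\varpi_{2^m-1}\}$, hence $\varpi_n=\sum_s c_s\beta_{s+1}$ whenever $n=\sum_s c_s 2^s$; with $k:=2^{l-1}$, the index $i$ uses only bit positions $<k$ and $j\cdot 2^k$ only positions $\geq k$, so the addition $i+j\cdot 2^k$ is carry-free and (a) follows from the definition $a_j=a+\varpi_{j\cdot 2^k}$.

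Fact (b) is the heart of the lemma. Introduce the additive operator $\phi_k(y):=y^{2^k}-y$ and note the characteristic-$2$ squaring identity $\phi_k\circ\phi_k=\phi_{2k}$, which drops out of $(y^{2^k}-y)^{2^k}=y^{2^{2k}}-y^{2^k}$ together with cancellation of the middle term. Iterating gives $\phi_{2^{l-1}}=\phi_1^{\circ\,2^{l-1}}$. The Cantor-basis defining relation is precisely $\phi_1(\beta_s)=\beta_{s-1}$ for $s\geq 2$ (with $\phi_1(\beta_1)=0$), so induction produces the shift rule $\phi_k(\beta_s)=\beta_{s-k}$ for $s>k$ and $\phi_k(\beta_s)=0$ for $s\leq k$. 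By additivity, $\phi_k(\varpi_i)=0$ when $i<2^k$ (since $\varpi_i\in W_k$), and $\phi_k(\varpi_{j\cdot 2^k})=\varpi_j$ by expanding $j$ in binary; combined with $\phi_k(a)=a^{2^{2^{l-1}}}-a=b$ this yields $\phi_k(a_j+\varpi_i)=b+\varpi_j$, i.e.\ exactly (b).

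The main obstacle is the shift rule $\phi_k(\beta_s)=\beta_{s-k}$ for $k$ a power of $2$; once the composition identity $\phi_k\circ\phi_k=\phi_{2k}$ is noticed it follows by routine induction, and every other step is bookkeeping. This structural fact is also why the algorithm is naturally parameterised by $l$ with $k=2^{l-1}$.
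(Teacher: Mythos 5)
Your proposal is correct and follows essentially the same route as the paper: identify the evaluation point $a+\varpi_{i+j2^{2^{l-1}}}$ with $a_j+\varpi_i$, show it is a root of $x^{2^{2^{l-1}}}-x-(b+\varpi_j)$, and invoke the remainder property. The only difference is that you supply the details (the operator $\phi_k$, the identity $\phi_k\circ\phi_k=\phi_{2k}$, and the shift rule on the Cantor basis) for the two facts the paper dismisses as ``easy to see.''
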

\begin{proof}
It is easy to see that
$$(a+\varpi_{j2^{2^{l-1}}}+\varpi_i)^{2^{2^{l-1}}}
-(a+\varpi_{j2^{2^{l-1}}}+\varpi_i)=b+\varpi_j.$$
That is, $x^{2^{2^{l-1}}}-x-(b+\varpi_j)$ vanishes at $a+\varpi_{j2^{2^{l-1}}}+\varpi_u$.
As $r_j$ is the remainder of $f$ modulo $x^{2^{2^{l-1}}}-x-(b+\varpi_j)$,
we have
$$f(a+\varpi_{j2^{2^{l-1}}}+\varpi_i)=r_j(a+\varpi_{j2^{2^{l-1}}}+\varpi_i)
=r_j(a_j+\varpi_i).$$
It is easy to see that
$\varpi_{j2^{2^{l-1}}}+\varpi_i=\varpi_{i+j2^{2^{l-1}}}$.
Therefore $f(a+\varpi_{i+j2^{2^{l-1}}})=r_j(a_j+\varpi_i)$.
The lemma is proved.
\end{proof}
The above lemma reduces the evaluation of $f$ to simultaneous remaindering of $f$ modulo $x^{2^{2^{l-1}}}-x-(b+\varpi_j)$, $j=0,\cdots, 2^{m-2^{l-1}}-1.$
Before describing the remaindering mechanism, we recall following expansion of polynomials introduced by Gao-Mateer \cite{GM10}.
\begin{definition} Let $k$ be a positive number, and $f(x)$ a polynomial over $\overline{{\rm GF}(2)}$.  The Gao-Mateer polynomials of $f$ at $x^{2^k}-x$
is the tuple  $(\hat{f}_0,\cdots,\hat{f}_{2^k-1})$ of polynomials over $\overline{{\rm GF}(2)}$ such that
$$f(x)=\sum_{i=0}^{2^k-1}x^i\hat{f}_i(x^{2^k}-x)^j.$$\end{definition}
\begin{lemma}[remaindering mechanism]
Let $L,m$ be numbers with  $1<m\leq 2^L$. Let $l=\lceil\log_2m\rceil$, and $b\in W_{2^L}\backslash W_{m-2^{l-1}}$.
Let $f$ be a polynomial over ${\rm GF}(2^{2^L})$ of degree $<2^m$, and $(\hat{f}_0,\cdots,\hat{f}_{2^{2^{l-1}}-1})$ the Gao-Mateer polynomials of $f$ at $x^{2^{2^{l-1}}}-x$.
Let $0\leq i<2^{l-1}, 0\leq j<2^{m-2^{l-1}}$,
$r_{ij}=\hat{f}_i(b+\varpi_j)$, and
$r_j(x)=\sum_{i=0}^{2^{2^{l-1}}-1}r_{ij}x^i$.
Then
 $r_j$ is the remainder of $f$ modulo $x^{2^{2^{l-1}}}-x-(b+\varpi_j)$.
\end{lemma}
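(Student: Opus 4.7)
The plan is to argue directly from the Gao--Mateer expansion. Writing $y = x^{2^{2^{l-1}}}-x$, the expansion
$$f(x)=\sum_{i=0}^{2^{2^{l-1}}-1}x^{i}\,\hat{f}_i(y)$$
expresses $f$ in the free basis $1,x,\dots,x^{2^{2^{l-1}}-1}$ of ${\rm GF}(2^{2^L})[x]$ over the subring ${\rm GF}(2^{2^L})[y]$. The reduction modulo $x^{2^{2^{l-1}}}-x-(b+\varpi_j)$ is exactly the specialization $y\mapsto b+\varpi_j$.

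First I would quote (or quickly verify) the existence and uniqueness of the Gao--Mateer expansion, so that the displayed identity for $f$ is available and the $\hat{f}_i$ are well-defined polynomials. Next, I would work in the quotient ring $R=\mathrm{GF}(2^{2^L})[x]/(x^{2^{2^{l-1}}}-x-(b+\varpi_j))$, where by definition $y\equiv b+\varpi_j$. Substituting into the expansion gives
$$f(x)\;\equiv\;\sum_{i=0}^{2^{2^{l-1}}-1}x^{i}\,\hat{f}_i(b+\varpi_j)\;=\;\sum_{i=0}^{2^{2^{l-1}}-1}r_{ij}\,x^{i}\;=\;r_j(x)\pmod{x^{2^{2^{l-1}}}-x-(b+\varpi_j)}.$$

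To finish, I would observe that $r_j$ has degree strictly less than $2^{2^{l-1}}$, which equals the degree of the modulus $x^{2^{2^{l-1}}}-x-(b+\varpi_j)$. Hence $r_j$ is the unique remainder of $f$ upon Euclidean division by that polynomial, which is exactly the claim.

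I do not expect a serious obstacle here: the content of the lemma is essentially just that the Gao--Mateer expansion ``diagonalises'' reduction modulo polynomials of the form $x^{2^{2^{l-1}}}-x-c$. The only mildly delicate point is making sure the degree bound $\deg r_j<2^{2^{l-1}}$ is visible from the construction $r_j(x)=\sum_{i=0}^{2^{2^{l-1}}-1}r_{ij}x^{i}$, which is immediate from the index range.
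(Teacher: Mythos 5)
Your proposal is correct and follows essentially the same route as the paper: substitute $x^{2^{2^{l-1}}}-x\equiv b+\varpi_j$ into the Gao--Mateer expansion to get $f\equiv r_j$ modulo the given polynomial. The only addition is that you explicitly record the degree bound $\deg r_j<2^{2^{l-1}}$ to identify $r_j$ as \emph{the} remainder, a point the paper leaves implicit.
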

\begin{proof}
In fact,
\begin{eqnarray*}
f(x)
&=&\sum_{i=0}^{2^{2^{l-1}}-1}x^i\hat{f}_i(x^{2^{2^{l-1}}}-x)\\
&\equiv&\sum_{i=0}^{2^{2^{l-1}}-1}x^i\hat{f}_i(b+\varpi_j)\\
&\equiv&\sum_{i=0}^{2^{2^{l-1}}-1}r_{ij}x^i
\\
&\equiv&r_j(x)\left({\rm mod }x^{2^{2^{l-1}}}-x-(b+\varpi_j)\right).
\end{eqnarray*}
The lemma is proved.
\end{proof}
The above lemma reduces to remaindering of $f$ to evaluations of
$\hat{f}_0,\cdots,\hat{f}_{2^{2^{l-1}}-1}$. The above two lemmas together show that the evaluation algorithm and the remaindering algorithm interplay between each other, and  justify the following pseudo-codes.
\vskip0.2in
\begin{tabular}{ll}
  &Pseudo-codes of ${\rm EA}(f,a,m,L)$\\
\hline
   1& if $m=1$; \\
2& $(v_0,v_1):=(f(a),f(a+1))$;\\
3&else \\
4&$l:=\lceil\log_2m\rceil$;\\
5&$b:=a^{2^{2^{l-1}}}-a$;\\
6&  $r:={\rm RA}(f,b,m,l,L)$;\\
7&for $j=0$ to $2^{m-2^{l-1}}-1$;\\
8&  $a_j:=a+\varpi_{j2^{2^{l-1}}}$;\\
9&$(v_{0+j2^{2^{l-1}}},\cdots,v_{2^{2^{l-1}}-1+j2^{2^{l-1}}}):={\rm EA}(r_j,a_j,2^{l-1},L)$;\\
10&end for\\
11&end if\\
&Return $(v_0,\cdots,v_{2^m-1})$;\\
\hline
\end{tabular}

\vskip0.4in
\begin{tabular}{ll}
&Pseudo-codes of ${\rm RA}(f,b,m,l,L)$\\
\hline
   1&$\hat{f}:=$ the Gao-Mateer polynomials of $f$ at $x^{2^{2^{l-1}}}-x$;\\
2& for $i=0$ to $2^{2^{l-1}}-1$;\\
3&$(r_{i0},\cdots,r_{i,2^{m-2^{l-1}}-1}):={\rm EA}(\hat{f}_i,b,m-2^{l-1},L)$;\\
 4&end for\\
5& for $j=0$ to $2^{m-2^{l-1}}-1$;\\
6&$r_j:=\sum_{i=0}^{2^{m-2^{l-1}}-1}r_{ij}x^i$;\\
7&end for\\
8&Return $(r_0,\cdots,r_{2^{m-2^{l-1}}})$;\\
\hline
\end{tabular}
\begin{theorem}\label{multiplication}
The evaluation algorithm ${\rm EA}(f,a,m,L)$ costs at most $2^{m-1}m$ multiplications in ${\rm GF}(2^{2^L})$.
\end{theorem}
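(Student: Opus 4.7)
The plan is strong induction on $m$, with the inductive variable being the ${\rm EA}$-parameter $m$ rather than the polynomial degree. Let $E(m)$ denote the number of multiplications in ${\rm GF}(2^{2^L})$ performed by a call to ${\rm EA}(\cdot,\cdot,m,L)$ on an input polynomial of degree $<2^m$, and let $R(m)$ denote the analogous count for ${\rm RA}(\cdot,\cdot,m,l,L)$ with $l=\lceil\log_2 m\rceil$. The goal is to show $E(m)\le 2^{m-1}m$.

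For the base case $m=1$, the input has the form $f=f_0+f_1x$, so line 2 computes $f(a)=f_0+f_1a$ with a single field multiplication and $f(a+1)=f(a)+f_1$ with none. Hence $E(1)=1=2^{0}\cdot 1$.

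For $m>1$, I would read two recurrences directly off the pseudo-codes,
$$E(m)=R(m)+2^{m-2^{l-1}}E(2^{l-1}),\qquad R(m)=2^{2^{l-1}}E(m-2^{l-1}),$$
which combine to
$$E(m)=2^{2^{l-1}}E(m-2^{l-1})+2^{m-2^{l-1}}E(2^{l-1}).$$
The constraint $2^{l-1}<m\le 2^l$ forces $2^{l-1}$ and $m-2^{l-1}$ to be positive integers strictly smaller than $m$, so the induction hypothesis applies to both subproblems and yields
$$E(m)\le 2^{2^{l-1}}\cdot 2^{m-2^{l-1}-1}(m-2^{l-1})+2^{m-2^{l-1}}\cdot 2^{2^{l-1}-1}\cdot 2^{l-1}=2^{m-1}(m-2^{l-1})+2^{m-1}\cdot 2^{l-1}=2^{m-1}m,$$
closing the induction. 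Notice the bound is in fact attained with equality.

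The step I expect to need the most care is arguing that the Gao-Mateer expansion in line 1 of ${\rm RA}$ and the auxiliary quantities $b=a^{2^{2^{l-1}}}-a$ and $a_j=a+\varpi_{j2^{2^{l-1}}}$ in ${\rm EA}$ contribute no field multiplications to the count. The expansion is produced by iterated Euclidean division by $x^{2^{2^{l-1}}}-x$, whose two nonzero coefficients are both $1$, so each division step is a pure rearrangement-plus-addition in ${\rm GF}(2^{2^L})$. The Frobenius power $a^{2^{2^{l-1}}}$ is computed by repeated squaring, which in characteristic $2$ is ${\rm GF}(2)$-linear and implementable without general field multiplications. The shifts $\varpi_{j2^{2^{l-1}}}$ are tabulated basis elements. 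Once these bookkeeping conventions are accepted, the recurrences above hold exactly as written and the arithmetic verification is immediate.
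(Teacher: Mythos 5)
Your proposal is correct and follows essentially the same route as the paper: the same two recurrences $E(m)\le 2^{m-2^{l-1}}E(2^{l-1})+R(m)$ and $R(m)\le 2^{2^{l-1}}E(m-2^{l-1})$ read off the pseudo-codes, the same base case $E(1)=1$, and the same inductive telescoping to $2^{m-1}m$. The only difference is that you spell out in more detail why line~2 of ${\rm EA}$ costs one multiplication and why the Gao--Mateer expansion and the Frobenius power contribute none, points the paper handles by citation or leaves implicit.
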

\begin{proof}
Let $E_0(m)$ and $R_0(m)$ be the maximum number of multiplications in running ${\rm EA}(f,a,m,L)$ and ${\rm RA}(f,b,m,l,L)$ respectively.
If $m=1$, then  by Line 2 in ${\rm EA}(f,a,m,L)$,
$$E_0(m)=2^{m-1}m.$$
Assume that $m>1$.
By Lines 6 and 9 in ${\rm EA}(f,a,m,L)$, $$E_0(m)\leq2^{m-2^{l-1}}E_0(2^{l-1})+R_0(m).$$
By Gao-Mateer's result, 
 Line 1  in ${\rm RA}(f,b,m,l,L)$
costs no multiplications.
So, by Line 3 in ${\rm RA}(f,b,m,l,L)$,
$$R_0(m)\leq 2^{2^{l-1}}E_0(m-2^{l-1}).$$
Thus $$E_0(m)\leq 2^{m-2^{l-1}}E_0(2^{l-1})+2^{2^{l-1}}E_0(m-2^{l-1}).$$
Hence, by induction, $$E_0(m)\leq2^{m-2^{l-1}}\cdot 2^{2^{l-1}-1}2^{l-1}+2^{2^{l-1}}\cdot2^{m-2^{l-1}-1} (m-2^{l-1})=2^{m-1}m.$$
The theorem is proved.
\end{proof}
\begin{lemma}
Let $E_1(m)$ be the maximum number of additions in running the algorithm  ${\rm EA}(f,a,m,L)$. If $m>1$, then
$$\frac{E_1(m)}{2^m}\leq \frac{E_1(2^{l-1})}{2^{2^{l-1}}}
+ \frac{E_1(m-2^{l-1})}{2^{m-2^{l-1}}}+m-2^{l-1}, \ l=\lceil\log_2m\rceil.$$
\end{lemma}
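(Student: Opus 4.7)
The plan is to introduce an auxiliary quantity $R_1(m)$ denoting the maximum number of additions used by ${\rm RA}(f,b,m,l,L)$, derive coupled recursive inequalities for $E_1$ and $R_1$ by inspecting the two pseudo-codes line by line, then eliminate $R_1$ and divide through by $2^m$ to produce the stated bound.

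First I would scan the pseudo-code of ${\rm EA}(f,a,m,L)$ in the case $m>1$. Line 5 computes $b$ from $a$ in $O(1)$ field additions (plus some squarings, which are linear operations in characteristic two); Line 6 invokes ${\rm RA}$ once, contributing at most $R_1(m)$; Line 8 uses one field addition per value of $j$, giving $2^{m-2^{l-1}}$ in total; and Line 9 makes $2^{m-2^{l-1}}$ recursive calls to ${\rm EA}$ with parameter $2^{l-1}$, contributing $2^{m-2^{l-1}}E_1(2^{l-1})$. Summing these yields
$$E_1(m) \leq R_1(m) + 2^{m-2^{l-1}} + 2^{m-2^{l-1}} E_1(2^{l-1}).$$

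Next I would turn to ${\rm RA}(f,b,m,l,L)$. Line 1 computes the Gao-Mateer expansion of a polynomial of degree $<2^m$ at $x^{2^{2^{l-1}}}-x$; denote its addition cost by $G(m,l)$. Line 3 invokes ${\rm EA}$ a total of $2^{2^{l-1}}$ times with parameter $m-2^{l-1}$, and Line 6 only assembles already-computed coefficients. Hence
$$R_1(m) \leq G(m,l) + 2^{2^{l-1}} E_1(m-2^{l-1}).$$
Substituting this into the preceding inequality and dividing by $2^m$ gives
$$\frac{E_1(m)}{2^m}\leq \frac{E_1(2^{l-1})}{2^{2^{l-1}}} + \frac{E_1(m-2^{l-1})}{2^{m-2^{l-1}}} + \frac{G(m,l)+2^{m-2^{l-1}}}{2^m}.$$

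The remaining, and main, obstacle is a clean upper bound on $G(m,l)$. For this I would invoke Gao-Mateer's cost analysis of the twisted Taylor expansion at $x^{2^{k}}-x$, specialised to $k=2^{l-1}$; their divide-and-conquer argument gives
$$G(m,l)+2^{m-2^{l-1}}\;\leq\;(m-2^{l-1})\cdot 2^m,$$
after which substitution into the displayed inequality produces exactly the claimed recurrence. The only point requiring care is to trace through the precise constant in the Gao-Mateer estimate at every level of the internal recursion used to produce the expansion, so that after dividing by $2^m$ the non-recursive overhead collapses to exactly $m-2^{l-1}$ rather than a larger multiple; this is a routine but fiddly calculation to which the proof devotes its main effort.
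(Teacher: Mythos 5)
Your proposal follows essentially the same route as the paper: introduce $R_1(m)$ for the addition cost of ${\rm RA}$, read off the coupled inequalities $E_1(m)\leq 2^{m-2^{l-1}}E_1(2^{l-1})+R_1(m)$ and $R_1(m)\leq 2^{2^{l-1}}E_1(m-2^{l-1})+2^m(m-2^{l-1})$ from the two pseudo-codes, cite Gao--Mateer for the $2^m(m-2^{l-1})$ cost of the expansion in Line 1, and divide by $2^m$. The only differences are cosmetic: you additionally charge for the Line 8 additions (which the paper ignores) and you flag, rather than carry out, the verification of the exact Gao--Mateer constant, which the paper likewise handles by bare citation.
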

\begin{proof}
Let  $R_1(m)$ be the maximum number of additions in  running the algorithm ${\rm RA}(f,b,m,l,L)$.
Then
by Lines 6 and 9 in ${\rm EA}(f,a,m,L)$,  $$E_1(m)\leq2^{m-2^{l-1}}E_1(2^{l-1})+R_1(m).$$
By  Gao-Mateer's result, 
 Line 1  in ${\rm RA}(f,b,m,l,L)$
costs  $2^m(m-2^{l-1})$ additions. So
 by Line 3 in ${\rm RA}(f,b,m,l,L)$,
$$R_1(m)\leq 2^{2^{l-1}}E_1(m-2^{l-1})+2^m(m-2^{l-1}).$$
Thus $$E_1(m)\leq 2^{m-2^{l-1}}E_1(2^{l-1})+2^{2^{l-1}}E_1(m-2^{l-1})+2^{m-1}(m-2^{l-1}).$$ That is,
$$\frac{E_1(m)}{2^m}\leq \frac{E_1(2^{l-1})}{2^{2^{l-1}}}
+ \frac{E_1(m-2^{l-1})}{2^{m-2^{l-1}}}+m-2^{l-1}.$$
The lemma is proved.
\end{proof}

\begin{corollary}[Gao-Mateer]
The algorithm ${\rm EA}(f,b,2^l,L)$ costs at most $2^{2^l}2^l(1+\frac{l}{2})$ additions.
\end{corollary}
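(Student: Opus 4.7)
The plan is to specialize the recurrence from the preceding lemma to $m = 2^l$ and then solve the resulting one-variable recursion by induction on $l$.

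First I would substitute $m = 2^l$ into the bound
$$\frac{E_1(m)}{2^m} \leq \frac{E_1(2^{l-1})}{2^{2^{l-1}}} + \frac{E_1(m-2^{l-1})}{2^{m-2^{l-1}}} + m - 2^{l-1}.$$
Since $\lceil \log_2 2^l \rceil = l$ and $2^l - 2^{l-1} = 2^{l-1}$, the two recursive terms coincide, and the bound collapses to
$$\frac{E_1(2^l)}{2^{2^l}} \;\leq\; 2\cdot\frac{E_1(2^{l-1})}{2^{2^{l-1}}} + 2^{l-1}.$$
Writing $a_l := E_1(2^l)/2^{2^l}$, this is the clean scalar recurrence $a_l \leq 2a_{l-1} + 2^{l-1}$, and the target bound becomes $a_l \leq 2^l(1 + l/2)$.

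For the base case $l = 0$, the pseudo-code at $m=1$ computes $(f(a), f(a+1))$ for a polynomial $f(x) = c_0 + c_1 x$ of degree $<2$; once $f(a)$ is known, $f(a+1) = f(a) + c_1$ costs a single additional addition, so $E_1(1) \leq 2$ and $a_0 \leq 1 = 2^0(1 + 0/2)$.

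For the inductive step, assume $a_{l-1} \leq 2^{l-1}\bigl(1 + (l-1)/2\bigr)$. Then
$$a_l \;\leq\; 2\cdot 2^{l-1}\left(1 + \frac{l-1}{2}\right) + 2^{l-1} \;=\; 2^l + (l-1)\,2^{l-1} + 2^{l-1} \;=\; 2^l\left(1 + \frac{l}{2}\right),$$
which is exactly the claimed bound after multiplying by $2^{2^l}$. There is no genuine obstacle here: the corollary is an arithmetic consequence of the preceding lemma, and the only non-mechanical observation is that evaluating at $m = 2^l$ balances the two subproblems, which is precisely the case for which the Gao-Mateer recursion is designed to be tight.
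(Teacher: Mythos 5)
Your proposal is correct and follows essentially the same route as the paper: specialize the lemma's recurrence to $m=2^l$ so the two subproblems coincide, then induct on $l$ with the base case $l=0$ read off from Line 2 of the pseudo-code. Your writeup is in fact slightly cleaner, since the paper's displayed recurrence has a harmless typo ($\tfrac{2^{l-1}}{2}$ where the lemma gives $2^{l-1}$) while its subsequent induction step uses the correct additive term $2^{l-1}$, exactly as you do.
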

\begin{proof}
Let $E_1(m)$ be the maximum number of additions in running ${\rm EA}(f,a,m,L)$.
By Line 2 of ${\rm EA}(f,a,m,L)$, the corollary is true if $l=0$.
By the last lemma,
$$\frac{E_1(2^l)}{2^{2^l}}\leq \frac{2E_1(2^{l-1})}{2^{2^{l-1}}}
+\frac{2^{l-1}}{2}.$$
Thus, by induction on $l$,
$$\frac{E_1(2^l)}{2^{2^l}}\leq 2^l(1+\frac{l-1}{2})
+2^{l-1}\leq 2^l(1+\frac{l}{2}).$$
The corollary is proved.
\end{proof}

\begin{theorem}\label{addition}
The algorithm ${\rm EA}(f,a,m,L)$ costs no more than $2^mm(1+\log_2 m)$ additions.
\end{theorem}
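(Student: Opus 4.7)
The plan is induction on $m$ applied to the normalized quantity $F(m)=E_1(m)/2^m$, aiming for $F(m)\leq m(1+\log_2 m)$. For the base case $m=1$ (so $f=c_0+c_1 x$), line 2 of ${\rm EA}$ computes $f(a)$ with one addition and then $f(a+1)=f(a)+c_1$ with one more; hence $F(1)=1=1\cdot(1+\log_2 1)$, so the bound holds with equality.

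For the inductive step, set $l=\lceil\log_2 m\rceil$ and $k=m-2^{l-1}$, so $1\leq k\leq 2^{l-1}$. The preceding lemma gives
$$F(m)\leq\frac{E_1(2^{l-1})}{2^{2^{l-1}}}+\frac{E_1(k)}{2^{k}}+k.$$
I would then apply the Gao--Mateer corollary to the first summand (obtaining the bound $2^{l-1}(1+(l-1)/2)$) and the inductive hypothesis to the second (obtaining $k(1+\log_2 k)$, valid since $k<m$). After cancelling the common term $2^{l-1}$, the theorem reduces to the purely numerical inequality
$$2^{l-2}(l-1)+k+k\log_2 k\leq (2^{l-1}+k)\log_2(2^{l-1}+k).$$

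To finish, I would split the right-hand side as $2^{l-1}\log_2(2^{l-1}+k)+k\log_2(2^{l-1}+k)$ and apply the crude estimate $\log_2(2^{l-1}+k)\geq l-1$ only to the first summand; the residual inequality rearranges to $k\log_2\!\bigl(2k/(2^{l-1}+k)\bigr)\leq 2^{l-2}(l-1)$, whose left-hand side is non-positive because $k\leq 2^{l-1}$ forces $2k\leq 2^{l-1}+k$, while the right-hand side is non-negative for $l\geq 1$. The mildly subtle point—and the only place the argument could fail—is that one must \emph{not} apply $\log_2(2^{l-1}+k)\geq l-1$ uniformly to the whole right-hand side: when $m$ is near $2^l$, the extra logarithmic slack $\log_2(2^{l-1}+k)-(l-1)$ on the $k$-part is precisely what absorbs the additive penalty $m-2^{l-1}$ introduced by step 1 of ${\rm RA}$ (the Gao--Mateer expansion). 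Everything else in the argument is routine bookkeeping.
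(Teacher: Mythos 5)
Your proof is correct, but it is organized differently from the paper's. The paper does not carry the target bound $m(1+\log_2 m)$ as an inductive invariant; instead it fully unrolls the recurrence of the preceding lemma along the binary expansion $m=\sum_{i=1}^t 2^{d_i}$, obtaining
$$\frac{E_1(m)}{2^m}\le\sum_{i=1}^t \frac{E_1(2^{d_i})}{2^{2^{d_i}}}+\sum_{i=1}^{t-1}2^{d_i}(t-i),$$
then applies the Gao--Mateer corollary to every power-of-two block and bounds the two resulting sums by $(1+\frac{\log_2 m}{2})m$ and $\frac{\log_2 m}{2}m$ respectively. You instead run a one-step strong induction with $E_1(m)/2^m\le m(1+\log_2 m)$ itself as the hypothesis, invoke the corollary only for the top block $2^{l-1}$ and the hypothesis for the remainder $k=m-2^{l-1}$, and reduce everything to the single inequality $2^{l-2}(l-1)+k+k\log_2 k\le(2^{l-1}+k)\log_2(2^{l-1}+k)$, which you verify correctly: the asymmetric split of the right-hand side and the observation that $k\le 2^{l-1}$ forces $2k\le 2^{l-1}+k$ are exactly what is needed, and your remark that the crude bound $\log_2(2^{l-1}+k)\ge l-1$ must not be applied to the $k$-part correctly identifies where a careless version would fail. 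What each approach buys: the paper's unrolling makes visible that each binary digit of $m$ contributes one Gao--Mateer block plus an additive penalty, at the cost of the combinatorial bookkeeping in the double sum over digits; your version avoids that bookkeeping entirely and confines all the work to one clean (if slightly delicate) logarithmic inequality. Both arguments rest on the same two ingredients --- the recurrence lemma and the Gao--Mateer corollary for powers of two --- so the difference is one of organization rather than substance.
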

\begin{proof}
By Line 2 of ${\rm EA}(f,a,m,L)$, we may assume that $m>1$.
Let $E_1(m)$ be the maximum number of additions in running ${\rm EA}(f,a,m,L)$. Write $m=\sum_{i=1}^t2^{d_i}$ with $\{d_1,\cdots,d_t\}\subseteq\{0,1,\cdots,l-1\}$.
By induction, the last lemma implies that
$$\frac{E_1(m)}{2^m}\leq \sum_{i=1}^t\frac{E_1(2^{d_i})}{2^{2^{d_i}}}
+\sum_{i=1}^{t-1}2^{d_i}(t-i).$$
By the last corollary, we have
\begin{eqnarray*}
\frac{E_1(m)}{2^m}&\leq& \sum_{i=1}^t2^{d_i}(1+\frac{d_i}{2})
+\sum_{i=1}^{t-1}2^{d_i}(t-i)\\
&\leq& (1+\frac{\log_2m}{2})\sum_{i=1}^t2^{d_i}
+\frac{\log_2m}{2}\sum_{i=1}^t2^{d_i}\\
&\leq& (1+\log_2m)m.
\end{eqnarray*}
Thus $E_1(m)\leq 2^m m(1+\log_2m)$.
The theorem is proved.
\end{proof}
\section{Interpolation and Dividend Reconstruction}
\hskip 0.2in  In this section, we describe an interpolation algorithm ${\rm IA}(f,a,m,L)$ and a dividend reconstruction algorithm ${\rm DRA}(r,b,m,l,L)$.

The tasks of ${\rm IA}(v,a,m,L)$ and ${\rm DRA}(r,b,m,l,L)$ are described in the following tables.
\vskip0.2in
\begin{tabular}{ll}
  &Task of  ${\rm IA}(v,a,m,L)$\\
\hline
 input 1  & $L$: a number;\\
 input 2  & $m$: a number  $\leq 2^L$;\\
input 3& $a$: an element in $W_{2^L}\backslash W_m$;\\
input 4&$v=(v_0,\cdots,v_{2^m-1})$: a tuple over ${\rm GF}(2^{2^L})$;\\
install & $(\beta_1,\cdots,\beta_{2^L})$: a Cantor basis of ${\rm GF}(2^{2^L})$ over ${\rm GF}(2)$;\\
   output& $f$: a polynomial over ${\rm GF}(2^{2^L})$ of degree $<2^m$ such that \\
 & $f(a+\varpi_i)=v_i$, $i=0,\cdots,2^m-1$;\\
  \hline
\end{tabular}
\vskip0.2in
\vskip0.2in
\begin{tabular}{ll}
&Task of ${\rm DRA}(r,b,m,l,L)$\\
\hline
input 1& $l$: a positive number $\leq L$;\\
input 2& $m$: a number satisfying $2^{l-1}<m\leq 2^l$;\\
input 3& $b$: a number $<2^{L-m}$;\\
input 4& $r=(r_0,\cdots,r_{2^{m-2^{l-1}}})$: polynomials over ${\rm GF}(2^{2^L})$ of degree $<2^{2^{l-1}}$;\\
install & $(\beta_1,\cdots,\beta_{2^L})$: a Cantor basis of ${\rm GF}(2^{2^L})$ over ${\rm GF}(2)$;\\
output&$f$: a polynomial over ${\rm GF}(2^{2^L})$ of degree $<2^m$ whose remainder\\
& modulo $x^{2^{2^{l-1}}}-x-(b+\varpi_j)$ is $r_j$;\\
\hline
\end{tabular}
\vskip0.2in
We now describe the mechanisms of the interpolation algorithm and the dividend reconstruction algorithm.
\begin{lemma}[interpolation mechanism]
Let $L,m$ be numbers with  $1<m\leq 2^L$, and $l=\lceil\log_2m\rceil$. Let $a\in W_{2^L}\backslash W_m$, and $b=a^{2^{2^{l-1}}}-a$.
Let $(v_0,\cdots,v_{2^m-1})$ be a tuple over ${\rm GF}(2^{2^L})$.
Let $0\leq i<2^{l-1}, 0\leq j<2^{m-2^{l-1}}$,  $a_j=a+\varpi_{j2^{2^{l-1}}}$,
and $r_j(x)$ the polynomial  over ${\rm GF}(2^{2^L})$ of degree $<2^{2^{l-1}}$ such that
$$r_j(a_j+\varpi_i)=v_{i+j2^{2^{l-1}}}.$$
If $f(x)$ is the polynomial  over ${\rm GF}(2^{2^L})$ of degree $<2^m$ whose remainders to the moduli  $$x^{2^{2^{l-1}}}-x-(b+\varpi_j),\ j=0,\cdots, 2^{m-2^{l-1}}-1$$ are $r_0(x),\cdots,r_{2^{m-2^{l-1}}-1}$.
Then
$$f(a+\varpi_u)=v_u,\ u=0,\cdots,2^m-1.$$
\end{lemma}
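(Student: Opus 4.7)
The plan is to recognize that this lemma is essentially the inverse of the earlier evaluation mechanism lemma, so the proof reduces to invoking that lemma together with a Chinese Remainder Theorem step to guarantee existence and uniqueness of $f$.

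First I would verify that the polynomial $f$ in the hypothesis is well defined. The moduli $x^{2^{2^{l-1}}}-x-(b+\varpi_j)$ for $j=0,\dots,2^{m-2^{l-1}}-1$ have the same leading form $x^{2^{2^{l-1}}}-x$ but pairwise distinct constant terms $b+\varpi_j$ (since the $\varpi_j$ are distinct), and each is the root-polynomial of an additive coset, hence squarefree. Therefore they are pairwise coprime, and their product has degree $2^{2^{l-1}}\cdot 2^{m-2^{l-1}}=2^m$. By the Chinese Remainder Theorem applied in ${\rm GF}(2^{2^L})[x]$, the system of congruences uniquely determines a polynomial $f$ of degree $<2^m$.

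Next I would carry out the indexing. Any $u\in\{0,1,\dots,2^m-1\}$ admits the unique decomposition $u=i+j\cdot 2^{2^{l-1}}$ with $0\le i<2^{2^{l-1}}$ and $0\le j<2^{m-2^{l-1}}$, matching the range in the hypothesis. Let $\bar r_j$ denote the true remainder of the constructed $f$ modulo $x^{2^{2^{l-1}}}-x-(b+\varpi_j)$; by construction $\bar r_j=r_j$. Applying the evaluation mechanism lemma to $f$, $a$, $m$, $L$ at index $(i,j)$ gives
\[
f(a+\varpi_{i+j2^{2^{l-1}}})=\bar r_j(a_j+\varpi_i)=r_j(a_j+\varpi_i)=v_{i+j2^{2^{l-1}}},
\]
where the last equality is the defining property of $r_j$ in the hypothesis. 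Rewriting in terms of $u$ yields $f(a+\varpi_u)=v_u$.

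The main obstacle, such as it is, lies only in the preliminary CRT step: one must check the moduli are pairwise coprime and that their total degree equals $2^m$, since otherwise the statement ``$f$ is the polynomial of degree $<2^m$ with the prescribed remainders'' would not be meaningful. Once that is in place, the entire substance of the lemma is supplied by the evaluation mechanism lemma, used in the reverse direction. No new computation is needed beyond invoking that lemma $2^m$ times (one for each $u$) and unwinding the notation.
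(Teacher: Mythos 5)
Your proof is correct and follows the same route as the paper, whose entire proof is the single sentence ``This follows from the evaluation mechanism''; you simply make explicit the application of that lemma to the constructed $f$ and add the (sensible but omitted) Chinese Remainder Theorem check that $f$ is well defined.
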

\begin{proof}
This follows from the evaluation mechanism.
\end{proof}
The above lemma reduces the construction of an interpolation polynomial to the reconstruction of a polynomial whose remainders to the moduli $$x^{2^{2^{l-1}}}-x-(b+\varpi_j),\ j=0,\cdots, 2^{m-2^{l-1}}-1$$ are given.
\begin{lemma}[dividend reconstruction mechanism]
Let $L,m$ be numbers with  $1<m\leq 2^L$. Let $l=\lceil\log_2m\rceil$, and $b\in W_{2^L}\backslash W_{m-2^{l-1}}$.
Let $r_0,\cdots,r_{2^{m-2^{l-1}}-1}$ be polynomials over ${\rm GF}(2^{2^L})$ of degree $<2^{2^{l-1}}$. Write
$$r_j(x)=\sum_{i=0}^{2^{2^{l-1}}-1}r_{ij}x^i,\ 0\leq j<2^{m-2^{l-1}}.$$
For $0\leq i<2^{l-1}$, let $t_i$ be the polynomial over ${\rm GF}(2^{2^L})$ of degree $<2^{2^{l-1}}$.
$$t_i(b+\varpi_j)=r_{ij}, \ 0\leq j<2^{m-2^{l-1}}.$$
If $f$ is the polynomial whose Gao-Mateer polynomials at $x^{2^{2^{l-1}}}-x$ are $(t_0,\cdots,t_{2^{2^{l-1}}-1})$,
Then the remainder of $f$ modulo $x^{2^{2^{l-1}}}-x-(b+\varpi_j)$
is $r_j$ for $j=0,\cdots,2^{m-2^{l-1}}-1$.
\end{lemma}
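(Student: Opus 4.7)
The plan is to recognize the statement as the direct converse of the remaindering mechanism lemma, and to obtain it by applying that lemma to the very polynomial $f$ being constructed here. Three preliminary facts have to be verified: that the interpolating polynomials $t_i$ are well-defined, that the resulting $f$ has degree $<2^m$, and that by construction $(t_0,\ldots,t_{2^{2^{l-1}}-1})$ are precisely the Gao--Mateer polynomials of $f$ at $x^{2^{2^{l-1}}}-x$.

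For the first, the elements $b+\varpi_0,\ldots,b+\varpi_{2^{m-2^{l-1}}-1}$ are $2^{m-2^{l-1}}$ distinct members of $\mathrm{GF}(2^{2^L})$, so for each $i$ there is a unique interpolating polynomial $t_i$ of degree $<2^{m-2^{l-1}}$; this is the sharp degree bound I would use (the looser bound $<2^{2^{l-1}}$ written in the statement is not needed). For the second, the Gao--Mateer expansion
$$f(x)=\sum_{i=0}^{2^{2^{l-1}}-1} x^i\, t_i\!\left(x^{2^{2^{l-1}}}-x\right)$$
has degree at most $(2^{2^{l-1}}-1)+2^{2^{l-1}}(2^{m-2^{l-1}}-1)=2^m-1$, so $\deg f<2^m$, which is the hypothesis required by the remaindering mechanism. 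For the third, the uniqueness of the Gao--Mateer decomposition of a polynomial of degree $<2^m$ at $x^{2^{2^{l-1}}}-x$ then ensures that $(t_0,\ldots,t_{2^{2^{l-1}}-1})$ really is the Gao--Mateer tuple of this specific $f$.

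With these facts in hand, I invoke the remaindering mechanism lemma on $f$: it says that the remainder of $f$ modulo $x^{2^{2^{l-1}}}-x-(b+\varpi_j)$ equals $\sum_{i=0}^{2^{2^{l-1}}-1} t_i(b+\varpi_j)\,x^i$. Substituting the defining identity $t_i(b+\varpi_j)=r_{ij}$ yields $\sum_i r_{ij}\,x^i = r_j(x)$, which is the desired conclusion. There is no real obstacle beyond this bookkeeping; the remaindering mechanism has already done the mathematical work, and the only care is in feeding it the right input and in ensuring that the degree accounting does not push $f$ above $2^m$, which would spoil the uniqueness of both the remainder and the Gao--Mateer expansion.
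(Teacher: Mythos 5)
Your proof is correct and takes the same route as the paper, whose entire argument is the one-line observation that the claim follows from the remaindering mechanism. Your additional bookkeeping (well-definedness of the $t_i$, the degree bound $\deg f<2^m$, and uniqueness of the Gao--Mateer expansion) only fills in details the paper leaves implicit.
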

\begin{proof}
This follows from the remaindering mechanism.
\end{proof}
The above lemma reduces to the dividend reconstruction to construction of interpolation polynomials. The above two lemmas together show that the interpolation algorithm and the dividend recinstruction algorithm interplay between each other, and  justify the following pseudo-codes.
\vskip0.2in
\begin{tabular}{ll}
  &Pseudo-codes of  ${\rm IA}(v,a,m,L)$\\
\hline
   1& if  $m=1$; \\
2& $f(x):=v_0-a(v_0+v_1)+(v_0+v_1)x$;\\
3&else \\
4&$l:=\lceil\log_2m\rceil$;\\
5&for $j=0$ to $2^{m-2^{l-1}}-1$;\\
6&  $a_j:=a+\varpi_{j2^{2^{l-1}}}$;\\
7&$w_j:=(v_{0+j2^{2^{l-1}}},\cdots,v_{2^{2^{l-1}}-1+j2^{2^{l-1}}})$;\\
8&$r_j:={\rm IA}(w_j,a_j,2^{l-1})$;\\
9&end for\\
10&$r=(r_0,\cdots, r_{2^{m-2^{l-1}}-1})$;\\
11& $b:=a^{2^{2^{l-1}}}-a$;\\
12& $f:={\rm DRA}(r,b,m,l,L)$;\\
13&end if\\
&Return $f$;\\
\hline
\end{tabular}
\vskip0.2in
\vskip0.2in
\begin{tabular}{ll}
&Pseudo-codes of ${\rm DRA}(r,b,m,l,L)$\\
\hline
1& for $j=0$ to $2^{m-2^{l-1}}-1$;\\
2&$(r_{0j},\cdots,r_{2^{2^{l-1}}-1,j}):=$ coefficients of $r_j$;\\
 3&end for;\\
4& for $i=0$ to $2^{2^{l-1}}-1$;\\
5&$v_i:=(r_{i0},\cdots,r_{i,2^{m-2^{l-1}}-1})$;\\
6&$t_i:={\rm IA}(v_i,b,m-2^{l-1},L)$;\\
 7&end for;\\
8&$t:=(t_0,\cdots,t_{2^{2^{l-1}}-1})$;\\
9&$f:=\sum_{i=0}^{2^{2^{l-1}}-1}t_i(x^{2^{2^{l-1}}}-x)x^i$;\\
&Return $f$;\\
\hline
\end{tabular}
\vskip0.2in
\begin{theorem}\label{interpolation}
The algorithm ${\rm IA}(v,a,m,L)$ costs no more than $2^{m-1}m$ multiplications, and no more than $2^mm(1+\log_2 m)$ additions.
\end{theorem}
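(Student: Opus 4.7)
The plan is to mirror the proofs of Theorems \ref{multiplication} and \ref{addition} verbatim. The key observation is that the interpolation algorithm IA and the dividend reconstruction algorithm DRA interplay in exactly the same way as the evaluation algorithm EA and the remaindering algorithm RA, by virtue of the two ``mechanism'' lemmas in this section (which are explicitly derived from their counterparts in Section 2). So I expect the recurrences for the cost of IA to be identical to those for EA, and the bounds should transfer without change.

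First I would introduce $I_0(m)$ and $D_0(m)$, the maximum number of multiplications in running IA and DRA. In the base case $m=1$, Line 2 of IA performs a single multiplication $a\cdot(v_0+v_1)$, matching $2^{m-1}m=1$. For $m>1$, reading off Lines 5--8 and Line 12 of IA gives
$$I_0(m)\leq 2^{m-2^{l-1}}I_0(2^{l-1})+D_0(m),$$
and reading off Lines 4--7 and 9 of DRA, together with the Gao-Mateer fact that the reconstruction of $f$ from its Gao-Mateer expansion is GF(2)-linear and hence costs no multiplications, gives
$$D_0(m)\leq 2^{2^{l-1}}I_0(m-2^{l-1}).$$
This is exactly the recurrence satisfied by $E_0$ and $R_0$, so the same induction used in Theorem \ref{multiplication} yields $I_0(m)\leq 2^{m-1}m$.

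For additions, let $I_1(m)$ and $D_1(m)$ be the analogous counts. The base case costs at most two additions (reusing the sum $v_0+v_1$). For $m>1$, the same accounting, together with the fact that the inverse Gao-Mateer expansion costs the same $2^m(m-2^{l-1})$ additions as the forward expansion, produces
$$I_1(m)\leq 2^{m-2^{l-1}}I_1(2^{l-1})+D_1(m),$$
$$D_1(m)\leq 2^{2^{l-1}}I_1(m-2^{l-1})+2^m(m-2^{l-1}),$$
which is exactly the recurrence analysed in Theorem \ref{addition}. The same proof then yields $I_1(m)\leq 2^m m(1+\log_2 m)$.

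The only genuinely new point to check is the cost of Line 9 of DRA: that recovering $f$ from its Gao-Mateer expansion $(t_0,\ldots,t_{2^{2^{l-1}}-1})$ admits the same bit-count as the forward expansion used in Line 1 of RA. This is the main obstacle, but it is resolved by observing that the Gao-Mateer expansion is a GF(2)-linear invertible change of basis on polynomials of degree $<2^m$, so its inverse can be evaluated with the same number of additions and without any multiplications. Once that is recorded, the two recurrences above collapse onto those of Theorems \ref{multiplication} and \ref{addition}, and the stated bounds follow.
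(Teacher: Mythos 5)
Your proposal is correct and follows exactly the route the paper intends: the paper's own proof of Theorem \ref{interpolation} is the single sentence ``This can be proved as Theorems \ref{addition} and \ref{multiplication},'' and your recurrences for $I_0, D_0, I_1, D_1$ are precisely the ones obtained by transposing the EA/RA analysis to IA/DRA. Your explicit check of the base case of Line 2 of IA and of the cost of inverting the Gao--Mateer expansion in Line 9 of DRA supplies details the paper leaves implicit, but the argument is the same.
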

\begin{proof}
This can be proved as Theorems \ref{addition} and \ref{multiplication}.
\end{proof}
\section{Multiplication}
In this section we use the evaluation and interpolation algorithms to give a multiplication algorithm ${\rm MA}(f,g,m,L)$.

The task of ${\rm MA}(f,g,m,L)$ is given in the following table.
\vskip0.2in
\begin{tabular}{ll}
  &Task of ${\rm MA}(f,g,m,L)$\\
\hline
 input 1  & $L$: a number;\\
 input 2  & $m$: a number $\leq 2^L$;\\
input 3&$(f,g)$: a pair of polynomials of degree $<2^m$ over ${\rm GF}(2^{2^L})$;\\
   install &$(\beta_1,\cdots,\beta_{2^L})$: a cantor basis of ${\rm GL}(2^{2^L})$;\\
   output& $fg$;\\
  \hline
\end{tabular}
\vskip0.2in
We now describe the mechanism of the multiplication algorithm.
\begin{lemma}[multiplication mechanism]
Let $L,m$ be numbers with  $1<m+1\leq 2^L$.
Let $f,g$ be polynomials of degree $<2^m$ over ${\rm GF}(2^{2^L})$, and $h$ the polynomial of degree $<2^{m+1}$ such that
$$h(\varpi_i)=f(\varpi_i)f(\varpi_i),\ i=0,\cdots,2^{m+1}-1.$$
Then $h(x)=f(x)g(x)$.
\end{lemma}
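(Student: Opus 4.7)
The plan is to invoke the standard uniqueness-of-interpolation argument: a nonzero polynomial cannot have more roots than its degree. I would first observe that the product $fg$ has degree $<2^{m+1}-1$, hence certainly degree $<2^{m+1}$, so the difference $h-fg$ is a well-defined polynomial of degree $<2^{m+1}$ over ${\rm GF}(2^{2^L})$.

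Next, I would use the hypothesis on $h$ (correcting the evident typo, the defining property is $h(\varpi_i)=f(\varpi_i)g(\varpi_i)$) to conclude that $(h-fg)(\varpi_i)=0$ for $i=0,\dots,2^{m+1}-1$. The key point to verify is that these are indeed $2^{m+1}$ \emph{distinct} elements of ${\rm GF}(2^{2^L})$: since $\varpi_0<\varpi_1<\cdots<\varpi_{2^{2^L}-1}$ is a strict ordering of the whole field, and the assumption $m+1\leq 2^L$ forces $2^{m+1}\leq 2^{2^L}$, the prescribed evaluation points $\varpi_0,\dots,\varpi_{2^{m+1}-1}$ lie inside the enumeration and are pairwise distinct.

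Finally I would conclude by the usual argument: a polynomial over the field ${\rm GF}(2^{2^L})$ (which is an integral domain) of degree $<2^{m+1}$ that vanishes at $2^{m+1}$ distinct points must be the zero polynomial. Hence $h-fg=0$, i.e., $h=fg$. There is essentially no technical obstacle here; the only mild subtlety is bookkeeping the inequality $m+1\leq 2^L$ to guarantee that enough distinct $\varpi_i$ are available, and noting that the hypothesis $1<m+1$ ensures we are not in a degenerate situation (in particular $2^{m+1}\geq 4$, so there is something to check).
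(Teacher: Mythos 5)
Your proof is correct and is precisely the standard argument the paper leaves implicit: its entire proof of this lemma is the single word ``Obvious.'' You correctly identify the typo ($f(\varpi_i)f(\varpi_i)$ should be $f(\varpi_i)g(\varpi_i)$), check that $\deg(fg)\leq 2^{m+1}-2<2^{m+1}$, and verify via $m+1\leq 2^L$ that the $2^{m+1}$ evaluation points are distinct, so $h-fg$ vanishes identically; nothing further is needed.
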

\begin{proof}
Obvious.
\end{proof}
The above lemma justifies the following pseudo-codes.
\vskip 0.2in
\begin{tabular}{ll}
 & Pseudo-codes of ${\rm MA}(f,g,m,L)$\\
\hline
1& $(u_0,u_1,\cdots,u_{2^{m+1}-1}):={\rm EA}(f,0,m+1,L)$;\\
2& $(v_0,v_1,\cdots,v_{2^{m+1}-1}):={\rm EA}(g,0,m+1,L)$;\\
3& for $i=0$ to $2^{m+1}-1$;\\
4&$w_i:=u_iv_i$;\\
5& end for\\
6&$w:=(w_0,\cdots, w_{2^{m+1}-1})$;\\
7&$h:={\rm IA}(w,0,m+1,L)$;\\
&Return $h$;\\
\hline
\end{tabular}
\vskip0.2in
\begin{theorem}\label{multiplicaion}
The algorithm ${\rm MA}(f,g,m,L)$ costs no more than $2^m(3m+5)$ multiplications  and $3\cdot2^{m+1}(m+1)(1+\log_2(m+1))$ additions in ${\rm GF}(2^{2^L})$.
\end{theorem}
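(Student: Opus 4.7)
The plan is to simply add up the costs of each of the three recursive subroutine calls in the pseudo-code of $\mathrm{MA}(f,g,m,L)$, together with the pointwise multiplications in the middle. The algorithm as written consists of (i) two calls to $\mathrm{EA}(\cdot,0,m+1,L)$ on lines 1--2, (ii) a loop of $2^{m+1}$ scalar products $w_i := u_i v_i$ on lines 3--5, and (iii) one call to $\mathrm{IA}(w,0,m+1,L)$ on line 7. So the only tools I need are the complexity bounds already established in Theorems~\ref{multiplication}, \ref{addition}, and \ref{interpolation}.

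For the multiplication count, I would first apply Theorem~\ref{multiplication} to each of the two $\mathrm{EA}$ calls with parameter $m+1$, each contributing at most $2^{(m+1)-1}(m+1) = 2^m(m+1)$ multiplications. The pointwise product loop contributes exactly $2^{m+1}$ multiplications. Then by Theorem~\ref{interpolation}, the $\mathrm{IA}$ call contributes at most another $2^m(m+1)$ multiplications. Summing gives
\[
2\cdot 2^m(m+1) + 2^{m+1} + 2^m(m+1) = 3\cdot 2^m(m+1) + 2^{m+1} = 2^m(3m+5),
\]
which is the claimed multiplication bound.

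For the addition count, no additions occur in the pointwise product loop, so the total is just the sum of the addition costs of the two $\mathrm{EA}$ calls and the single $\mathrm{IA}$ call. By Theorem~\ref{addition}, each $\mathrm{EA}(\cdot,0,m+1,L)$ costs at most $2^{m+1}(m+1)\bigl(1+\log_2(m+1)\bigr)$ additions; by Theorem~\ref{interpolation} the $\mathrm{IA}$ call satisfies the same bound. Adding the three contributions yields $3\cdot 2^{m+1}(m+1)\bigl(1+\log_2(m+1)\bigr)$, matching the claim.

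There is no real obstacle here: the correctness of the algorithm (via the multiplication mechanism lemma) ensures that these three subroutine calls and the pointwise products are \emph{all} the field operations performed, so the bound follows from straightforward arithmetic. The only thing to double-check is that I am not double-counting the scalar products as part of the $\mathrm{EA}$ or $\mathrm{IA}$ cost and that the parameter passed to the subroutines is $m+1$ rather than $m$, so the factor $2^m(m+1)$ (not $2^{m-1}m$) is the correct per-call bound.
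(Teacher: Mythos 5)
Your proposal is correct and follows exactly the same route as the paper's own proof: bound the two ${\rm EA}$ calls and the one ${\rm IA}$ call with parameter $m+1$ via Theorems \ref{multiplication}, \ref{addition}, and \ref{interpolation}, add the $2^{m+1}$ pointwise products, and sum to $2^m(3m+5)$ multiplications and $3\cdot2^{m+1}(m+1)(1+\log_2(m+1))$ additions. Nothing further is needed.
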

\begin{proof}
Firstly, by Theorems \ref{addition} and \ref{multiplication},
Lines 1-2 each costs at most $2^m(m+1)$ multiplications and $2^{m+1}(m+1)(1+\log_2(m+1)$ additions.
Secondly by Theorem \ref{interpolation},
Line 7 costs  at most $2^m(m+1)$ multiplications  and $2^{m+1}(m+1)(1+\log_2(m+1)$ additions.
Thirdly, Line 4 costs at most  $2^{m+1}$ multiplications.
Therefore the algorithm totally costs
 at most $2^m(3m+5)$ multiplications  and $3\cdot2^{m+1}(m+1)(1+\log_2(m+1))$ additions.
The theorem is proved.
\end{proof}
\section{Special Multiplication}
In this section we  give a special multiplication algorithm  ${\rm SMA}(f,g,m)$.

The task of  ${\rm SMA}(f,g,m)$ is given in the following table.
\vskip0.2in
\begin{tabular}{ll}
  &Task of ${\rm SMA}(f,g,m)$\\
\hline
 input 1  & $m$: a number;\\
input 2&$(f,g)$: a pair of polynomials of degree $<2^m$ over ${\rm GF}(2)$;\\
   install 1& $L:=\lceil\log_2(m+1)\rceil$;\\
install 2 &$(\beta_1,\cdots,\beta_{2^L})$: a cantor basis of ${\rm GL}(2^{2^L})$;\\
   output& $fg$;\\
  \hline
\end{tabular}
\vskip0.2in
\begin{lemma}[special multiplication mechanism]
Let $f,g$ be polynomials of degree $<2^m$ over ${\rm GF}(2)$.
Write
$$f(x)=\sum_{i=0}^{2^m-1}a_ix^i, \
g(x)=\sum_{i=0}^{2^m-1}b_ix^i.$$
For $i=0,\cdots,2^{m-L+1}-1$, write $A_i=F_i(\beta_{2^L})$, and $B_i=G_i(\beta_{2^L})$, where $$F_i(x)=\sum_{j=0}^{2^{L-1}-1}a_{2^{L-1}i+j}x^j,\
G_i(x):=\sum_{j=0}^{2^{L-1}-1}b_{2^{L-1}i+j}x^j.$$
Let $H(x)=F(x)G(x)$, where
$$F(x)=\sum_{i=0}^{2^{m-L+1}-1}A_ix^i,\
G(x)=\sum_{i=0}^{2^{m-L+1}-1}B_ix^i.$$
Write
$H(x)=\sum_{i=0}^{2^{m-L+2}-1}C_ix^i$.
For $i=0,\cdots,2^{m-L+2}-1$, write $$C_i:=\sum_{j=0}^{2^L-1}h_{ij}\beta_{2^L}^j,\ h_{ij}=0,1.$$
Then
$$f(x)g(x)=\sum_{i=0}^{2^{m-L+2}-1}x^{2^{L-1}i}
\sum_{j=0}^{2^L-1}h_{ij}x^j.$$
\end{lemma}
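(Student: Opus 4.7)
The plan is to expand both $f$ and $g$ in blocks of length $2^{L-1}$, multiply them, and then identify the coefficients of the product through the evaluation homomorphism ${\rm GF}(2)[x]\to{\rm GF}(2^{2^L})$ sending $x\mapsto\beta_{2^L}$.

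First, by the definitions of the $F_s$ and $G_t$,
\[
f(x)=\sum_{s=0}^{2^{m-L+1}-1}x^{2^{L-1}s}F_s(x),\qquad
g(x)=\sum_{t=0}^{2^{m-L+1}-1}x^{2^{L-1}t}G_t(x),
\]
so multiplying and regrouping by $i=s+t$ yields
\[
f(x)g(x)=\sum_{i=0}^{2^{m-L+2}-1}x^{2^{L-1}i}\phi_i(x),\qquad
\phi_i(x):=\sum_{s+t=i}F_s(x)G_t(x).
\]
Each $\phi_i$ is a polynomial over ${\rm GF}(2)$ of degree $<2^L$. Using that $P\mapsto P(\beta_{2^L})$ is a ring homomorphism,
\[
\phi_i(\beta_{2^L})=\sum_{s+t=i}F_s(\beta_{2^L})G_t(\beta_{2^L})=\sum_{s+t=i}A_sB_t=C_i,
\]
the last equality being the very definition of the convolution $H=FG$.

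The crucial step is then to verify that $\{1,\beta_{2^L},\beta_{2^L}^2,\dots,\beta_{2^L}^{2^L-1}\}$ is a basis of ${\rm GF}(2^{2^L})$ over ${\rm GF}(2)$, since then the given decomposition $C_i=\sum_j h_{ij}\beta_{2^L}^j$, combined with $\deg\phi_i<2^L$, forces $\phi_i(x)=\sum_{j=0}^{2^L-1}h_{ij}x^j$ by uniqueness. To prove this, I would show $\beta_{2^L}$ has degree exactly $2^L$ over ${\rm GF}(2)$: if its minimal polynomial had degree $d<2^L$, then $\beta_{2^L}\in{\rm GF}(2^d)$, and iterating the Cantor relation $\beta_{i-1}=\beta_i^2-\beta_i$ downward would force every $\beta_i$ into ${\rm GF}(2^d)$, contradicting the fact that $(\beta_1,\dots,\beta_{2^L})$ spans ${\rm GF}(2^{2^L})$. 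Once this is in place, substituting $\phi_i(x)=\sum_j h_{ij}x^j$ back into the block expansion of $fg$ yields the claimed formula, and this basis verification is the only non-mechanical ingredient in the argument.
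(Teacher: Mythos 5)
Your proof is correct and follows essentially the same route as the paper's: decompose $f$ and $g$ into blocks of length $2^{L-1}$, identify the convolution coefficients $C_i$ with the evaluations $\phi_i(\beta_{2^L})$, and recover the block polynomials from the $h_{ij}$ by a degree-$<2^L$ uniqueness argument. The only difference is that you explicitly justify the linear independence of $1,\beta_{2^L},\dots,\beta_{2^L}^{2^L-1}$ over ${\rm GF}(2)$ via the Cantor relation, a fact the paper's proof uses implicitly in the step passing from $H_i(\beta_{2^L})=\sum_j F_j(\beta_{2^L})G_{i-j}(\beta_{2^L})$ to the polynomial identity.
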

\begin{proof}
Let $i=0,\cdots,2^{m-L+2}-1$, and write
$$H_i(x)=\sum_{j=0}^{2^L-1}h_{ij}x^j.$$
Then
\begin{eqnarray*}
  H_i(\beta_{2^L}) &=& C_i \\
   &=& \sum_{j=0}^iA_jB_{j-i}\\
&=&\sum_{j=0}^iF_j(\beta_{2^L})G_{j-i}(\beta_{2^L}).
\end{eqnarray*}
As $H_i(x)-\sum_{j=0}^iF_i(x)G_{j-i}(x)$ is of degree $<2^L$,
we conclude that
$$H_i(x)=\sum_{j=0}^iF_i(x)G_{j-i}(x).$$
It follows that
\begin{eqnarray*}
  h(x)&=&\sum_{i=0}^{2^{m-L+2}-1}x^{2^{L-1}i}H_i(x)\\
   &=&\sum_{i=0}^{2^{m-L+2}-1}x^{2^{L-1}i}\sum_{j=0}^iF_j(x)G_{j-i}(x)\\
   &=&\left(\sum_{j=0}^{2^{m-L+1}-1}F_j(x)x^{2^{L-1}j}\right)
\left(\sum_{j=0}^{2^{m-L+1}-1}G_j(x)x^{2^{L-1}j}\right)\\
&=&f(x)g(x).
\end{eqnarray*}
The lemma is proved.
\end{proof}
The above lemma justifies the following pseudo-codes.
\vskip0.2in
\begin{tabular}{ll}
 & Pseudo-codes of ${\rm SMA}(f,g,m)$\\
\hline
1& if $m=0$;\\
2&$h:=f(0)g(0)$;\\
3&else\\
4&$(a_0,\cdots,a_{2^m-1}):=$ coefficients of $f$;\\
5&$(b_0,\cdots,b_{2^m-1}):=$ coefficients of $g$;\\
6& for $i=0$ to $2^{m-L+1}-1$;\\
7& $A_i:=\sum_{j=0}^{2^{L-1}-1}a_{2^{L-1}i+j}\beta_{2^L}^j$;\\
8& $B_i:=\sum_{j=0}^{2^{L-1}-1}b_{2^{L-1}i+j}\beta_{2^L}^j$;\\
9&end for\\
10&$F(x):=\sum_{i=0}^{2^{m-L+1}-1}A_ix^i$;\\
11&$G(x):=\sum_{i=0}^{2^{m-L+1}-1}B_ix^i$;\\
12&$H(x):={\rm MA}(F,G,m-L+1)$;\\
13&$(C_0,\cdots,C_{2^{m-L+2}}):=$ coefficients of $H$;\\
14& for $i=0$ to $2^{m-L+2}-1$;\\
15& $C_i:=\sum_{j=0}^{2^L-1}h_{ij}\beta_{2^L}^j,\ h_{ij}=0,1$;\\
16&end for\\
17& $h(x):=\sum_{i=0}^{2^{m-L+2}-1}x^{2^{L-1}i}
\sum_{j=0}^{2^L-1}h_{ij}x^j$;\\
18&end if\\
&Return $h$;\\
\hline
\end{tabular}
\vskip0.2in

\begin{lemma}[multiplication cost]
The algorithm ${\rm SMA}(f,g,m)$ costs no more than
$2^{m-L+1}(3m+9)$ multiplications  and $2^{m-L+2}(3m+7)(1+\log_2(m+2))$  additions in ${\rm GF}(2^{2^L})$.
\end{lemma}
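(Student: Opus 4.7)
The plan is to trace ${\rm SMA}(f,g,m)$ line by line and observe that the only substantial ${\rm GF}(2^{2^L})$-arithmetic is the recursive call ${\rm MA}(F,G,m-L+1)$ on Line 12; Theorem~\ref{multiplicaion} then yields the claim essentially verbatim after simplification.

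First I would adopt the convention that elements of ${\rm GF}(2^{2^L})$ are stored in the polynomial basis $1,\beta_{2^L},\beta_{2^L}^2,\ldots,\beta_{2^L}^{2^L-1}$, which is a basis because $\beta_{2^L}$ has degree $2^L$ over ${\rm GF}(2)$. Under this convention, Lines 4--11 and 13--17 reduce to reindexing of bit tuples and cost no multiplications or additions in ${\rm GF}(2^{2^L})$: each $A_i$ (resp.\ $B_i$) produced in Line 7 (resp.\ Line 8) is, by construction, just the coefficient vector $(a_{2^{L-1}i},\ldots,a_{2^{L-1}i+2^{L-1}-1},0,\ldots,0)$ in that basis, and extracting the bits $h_{ij}$ from $C_i$ on Line 15 is likewise a no-op. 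Line 2 handles the base case $m=0$ at the cost of a single product in ${\rm GF}(2)$, which is absorbed below.

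Second, I would apply Theorem~\ref{multiplicaion} to Line 12 with the parameter $m-L+1$ in place of $m$, yielding at most $2^{m-L+1}\bigl(3(m-L+1)+5\bigr)$ multiplications and at most $3\cdot 2^{m-L+2}(m-L+2)\bigl(1+\log_2(m-L+2)\bigr)$ additions in ${\rm GF}(2^{2^L})$. Finally, using $L\geq 1$, the multiplication bound simplifies to $2^{m-L+1}(3m+5)\leq 2^{m-L+1}(3m+9)$; and using $3(m-L+2)\leq 3m+6\leq 3m+7$ together with $\log_2(m-L+2)\leq \log_2(m+2)$, the addition bound simplifies to $2^{m-L+2}(3m+7)\bigl(1+\log_2(m+2)\bigr)$, matching the statement.

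The main obstacle in this plan is the representation argument in the first paragraph: absent the polynomial-basis convention, Lines 7--8 would incur up to $2^{m+1}$ genuine additions in ${\rm GF}(2^{2^L})$ and Line 15 a further change-of-basis cost, and the slack between $3m+6$ and $3m+7$ in the stated addition bound contributes only $2^{m-L+2}\bigl(1+\log_2(m+2)\bigr)$, which for $L=\lceil\log_2(m+1)\rceil$ is too small to absorb $2^{m+1}$. So the cleanliness of the bookkeeping hinges on recognising that under the natural internal representation the conversions are free.
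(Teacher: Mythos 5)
Your overall route is the same as the paper's: charge essentially everything to the call ${\rm MA}(F,G,m-L+1)$ on Line 12, invoke the ${\rm MA}$ cost theorem with $m-L+1$ in place of $m$, and simplify using $L\geq 1$ and $\log_2(m-L+2)\leq\log_2(m+2)$. Your treatment of Lines 7--8 and 15 is fine (and implicitly matches the paper, which assigns them no cost): in the basis $1,\beta_{2^L},\ldots,\beta_{2^L}^{2^L-1}$ these are indeed pure bit reindexing.

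The genuine gap is your claim that Line 17 is also a cost-free reindexing. It is not: in $h(x)=\sum_{i}x^{2^{L-1}i}H_i(x)$ each block $H_i$ has degree up to $2^L-2$ while the stride is only $2^{L-1}$, so consecutive blocks overlap in $2^{L-1}$ coefficient positions and the assembly requires on the order of $2^{m+1}$ genuine bit additions (the paper budgets $2^{m+2}$ for this line and absorbs them precisely in the bump from $3m+6$ to $3m+7$). Your final inequality happens to survive, because the slack $2^{m-L+2}\bigl(1+\log_2(m+2)\bigr)$ of ${\rm GF}(2^{2^L})$-additions is worth $2^{m+2}\bigl(1+\log_2(m+2)\bigr)$ bit operations, which comfortably covers the overlap cost; but your justification as written is wrong, and your closing paragraph shows a units confusion that would trip you up here: a cost of $2^{m+1}$ \emph{bit} additions is only about $2^{m-L+1}$ additions \emph{in} ${\rm GF}(2^{2^L})$, so it is absorbable by the stated slack, whereas $2^{m+1}$ full field additions would not be. You need to (i) acknowledge the overlap cost of Line 17 and (ii) convert it to the correct units before absorbing it into the $3m+7$ term.
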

\begin{proof}
Firstly, Line 13, by  Theorem \ref{multiplication}, costs
 at most $2^{m-L+1}(3m+9)$ multiplications  and $2^{m-L+2}(3m+6)(1+\log_2(m+2))$ additions in ${\rm GF}(2^{2^L})$.
Secondly, Line 17 costs $2^{m+2}$ additions in ${\rm GF}(2^{2^L})$.
Therefore, the algorithm costs
 at most $2^{m-L+1}(3m+9)$ multiplications  and $2^{m-L+2}(3m+7)(1+\log_2(m+2))$ additions in ${\rm GF}(2^{2^L})$.
The lemma is proved.
\end{proof}
\begin{lemma}[coarse cost in bit operations]
The algorithm ${\rm SMA}(f,g,m)$ costs no more than  $$2^{m+2}(3m+9)(4m+3+\log_2(m+2))$$ bit operations.
\end{lemma}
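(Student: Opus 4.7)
The plan is to convert the preceding lemma's bound on the number of ${\rm GF}(2^{2^L})$-arithmetic operations into a bit-operation bound by pricing each field operation and summing. First I would price the elementary field operations. Storing elements of ${\rm GF}(2^{2^L})$ as $2^L$-bit vectors in the polynomial basis $(\beta_{2^L}^j)_{j=0}^{2^L-1}$, one field addition is a single XOR of two $2^L$-bit strings, costing at most $2^L$ bit operations; one field multiplication is a schoolbook multiplication of two degree-$<2^L$ polynomials over ${\rm GF}(2)$ (at most $2\cdot 4^L$ bit operations) followed by reduction modulo the degree-$2^L$ defining polynomial (at most $4^L$ bit operations), totaling at most $3\cdot 4^L$ bit operations per field multiplication.

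Next, I would plug these unit prices into the preceding lemma's counts of $2^{m-L+1}(3m+9)$ field multiplications and $2^{m-L+2}(3m+7)(1+\log_2(m+2))$ field additions. The resulting field-arithmetic total is $3\cdot 2^{m+L+1}(3m+9) + 2^{m+2}(3m+7)(1+\log_2(m+2))$ bit operations. The key size-inequality is $2^L\le 2m$ for $m\ge 1$, which follows from $L=\lceil\log_2(m+1)\rceil$ and $2^{L-1}<m+1$. Applying it along with $3m+7\le 3m+9$ collapses the total to at most $2^{m+2}(3m+9)(3m+1+\log_2(m+2))$, which sits inside the claim with a slack of $(m+2)\cdot 2^{m+2}(3m+9)$. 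The residual bit operations --- computing the $O(2^L)$ precomputed powers $\beta_{2^L}^j$, forming each $A_i$, $B_i$ as a subset sum at lines 7--8, extracting the bits $h_{ij}$ at line 15 (free in the polynomial basis), and XORing overlapping $2^L$-bit windows at line 17 --- each contribute at most $O(m\cdot 2^m)$ bit operations, comfortably inside this slack. The base case $m=0$ is immediate since SMA then returns the single-bit product $f(0)g(0)$.

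The main obstacle will be the tight accounting of a single ${\rm GF}(2^{2^L})$-multiplication: the constant $c$ in $M_L\le c\cdot 4^L$ must satisfy $cm+1\le 4m+3$ for all $m\ge 1$, i.e., $c\le 4+2/m$, forcing $c\le 4$ asymptotically. The schoolbook mult-and-reduce above gives $c=3$ and so meets this bound; any substantially worse per-multiplication estimate would require either a sparser defining polynomial so that reduction costs $O(2^L)$ per overflow bit, or a sharper size-inequality, to preserve the $(4m+3)$ coefficient in the claim.
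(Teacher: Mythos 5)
Your proposal is correct and follows essentially the same route as the paper: price each ${\rm GF}(2^{2^L})$-addition at $2^L$ bit operations and each multiplication at $O(4^L)$ bit operations, multiply by the operation counts from the preceding lemma, and absorb the factor $2^L$ via $2^L\le 2m+1$ (the paper prices a multiplication at $2^{2L+2}$ rather than your $3\cdot 4^L$, which is why it lands exactly on the coefficient $4m+3$ while you land below it with slack). Your extra accounting of the residual bookkeeping at lines 7--8, 15 and 17 is a sound refinement the paper folds into its addition count rather than treating separately.
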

\begin{proof}
By the last lemma,
 the algorithm costs
 at most $2^{m-L+1}(3m+9)$ multiplications  and $2^{m-L+2}(3m+7)(1+\log_2(m+2))$ additions in ${\rm GF}(2^{2^L})$.
Notice that, each addition in ${\rm GF}(2^{2^L})$ costs $2^L$ bit operations.
So the $2^{m-L+2}(3m+7)(1+\log_2(m+2))$ additions in ${\rm GF}(2^{2^L})$ contribute at most $2^{m+2}(3m+7)(1+\log_2(m+2)$ bit operations.
Also notice that each multiplication costs  at most $2^{2L+2}$ bit operations.
So the $2^{m-L+1}(3m+9)$ multiplications  in ${\rm GF}(2^{2^L})$ contribute at most $2^{m+L+3}(3m+9)$ bit operations.
It follows that
the algorithm  costs at most  $2^{m+2}(3m+9)(2^{L+1}+1+\log_2(m+2))$ bit operations.
The lemma now follows from the fact that $2^L\leq 2m+1$.
\end{proof}
\begin{theorem}
The algorithm ${\rm SMA}(f,g,m)$ costs $O(2^mm(\log m)^2)$ bit operations.
\end{theorem}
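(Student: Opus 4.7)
The plan is to tighten the preceding coarse-cost lemma by no longer treating a multiplication in ${\rm GF}(2^{2^L})$ as a naive $O(2^{2L})$ bit operations, but instead performing it by a recursive invocation of ${\rm SMA}$. Write $T(m)$ for the worst-case bit-cost of ${\rm SMA}(f,g,m)$ and set $L=\lceil\log_2(m+1)\rceil$, so $2^L\leq 2(m+1)$. The multiplication-cost lemma still accounts for $2^{m-L+1}(3m+9)$ multiplications and $2^{m-L+2}(3m+7)(1+\log_2(m+2))$ additions in ${\rm GF}(2^{2^L})$; the additions remain $2^L$-bit XORs, contributing $O(2^m m\log m)$ bit operations in aggregate. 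For each multiplication I would view the two operands as polynomials of degree $<2^L$ over ${\rm GF}(2)$, invoke ${\rm SMA}(\cdot,\cdot,L)$ to form the degree-$<2^{L+1}$ product, and then reduce modulo the fixed defining polynomial of ${\rm GF}(2^{2^L})$, all at cost $O(T(L))$ bit operations. Since $2^{m-L+1}(3m+9)=O(2^m)$, this yields the recurrence
\[
T(m)\leq c_1\cdot 2^m\,T(L)+c_2\cdot 2^m m\log m,\qquad L\leq \log_2 m+2.
\]

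The theorem then follows by induction on $m$. Assuming $T(k)\leq C\cdot 2^k k(\log k)^2$ for all $k<m$, we get $T(L)\leq C\cdot 2^L L(\log L)^2=O(Cm\log m(\log\log m)^2)$, so the first term on the right side of the recurrence is bounded by $O(C\cdot 2^m m\log m(\log\log m)^2)$ and the second term by $O(2^m m\log m)$. Both are $o(2^m m(\log m)^2)$, since $\log m\cdot(\log\log m)^2=o((\log m)^2)$, so choosing $C$ sufficiently large closes the induction and gives $T(m)=O(2^m m(\log m)^2)$.

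The main obstacle is the one step that goes outside the preceding material in the paper: verifying that modular reduction in ${\rm GF}(2^{2^L})$ can be carried out within $O(T(L))$ bit operations rather than the $O(2^{2L})$ of long division. This is handled by precomputing the reciprocal of the fixed defining polynomial once and using Newton-style fast reduction layered on the recursive ${\rm SMA}(\cdot,\cdot,L)$ call, or alternatively by exploiting the sparsity of the minimal polynomial coming from the Cantor tower; either option keeps the per-multiplication bit-cost at $O(T(L))$ and so preserves the recurrence above.
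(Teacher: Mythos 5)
Your proposal is correct and reaches the stated bound, but it analyzes the recursion differently from the paper and is more careful on one substantive point. The paper opens the recursion only one level: it performs each of the $O(2^m)$ multiplications in ${\rm GF}(2^{2^L})$ by a call to ${\rm SMA}(\cdot,\cdot,L)$ and then bounds that call by the \emph{coarse} quadratic lemma, getting $O(2^LL^2)=O(m(\log m)^2)$ bit operations per field multiplication and hence $O(2^mm(\log m)^2)$ in total; no recurrence is set up and no induction on $m$ is performed. You instead write the full recurrence $T(m)\leq c_1 2^mT(L)+c_2 2^m m\log m$ with $L=O(\log m)$ and close it by induction using $\log m\,(\log\log m)^2=o((\log m)^2)$; this is equally valid (the induction does close, since the factor multiplying $C$ in the inductive step is $o((\log m)^2)/(\log m)^2\to 0$ independently of $C$), and it actually shows the method proves something slightly stronger than the stated $O(2^mm(\log m)^2)$. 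More importantly, you explicitly confront the step the paper silently skips: a field multiplication is a degree-$<2^L$ polynomial product \emph{followed by a reduction} modulo the defining polynomial of ${\rm GF}(2^{2^L})$, and naive long division there costs $O(2^{2L})=O(m^2)$ bit operations per multiplication, which would push the total to $O(2^mm^2)$ and destroy the theorem. Your Newton/Barrett reduction with a precomputed reciprocal, riding on the same recursive multiplier, repairs this at cost $O(T(L))$ per reduction and is the right fix; your alternative appeal to sparsity of the minimal polynomial of $\beta_{2^L}$ in the Cantor tower is not substantiated (the sparse polynomial $s_{2^L-1}(x)+1$ that $\beta_{2^L}$ satisfies has degree $2^{2^L-1}$, far above the degree-$2^L$ minimal polynomial, whose sparsity is not evident), so you should rely on the reciprocal route. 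Net: your argument is sound and in fact closes a genuine gap in the paper's own proof.
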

\begin{proof}
By the multiplication cost,
 the algorithm costs
 at most $2^{m-L+1}(3m+9)$ multiplications  and $2^{m-L+2}(3m+7)(1+\log_2(m+2)))$ additions in ${\rm GF}(2^{2^L})$.
By the proof of the last lemma, the $2^{m-L+2}(3m+7)(1+\log_2(m+2))$ additions in ${\rm GF}(2^{2^L})$ contribute  $O(2^mm\log_2m$ bit operations.
Recursively applying our special multiplication algorithm to carry multiplications in
${\rm GF}(2^{2^L})$, by the last lemma, each multiplication costs  at most $O(2^LL^2)$  bit operations.
So the $2^{m-L+1}(3m+9)$ multiplications contribute at most $O(2^mm(\log m)^2)$ bit operations.
The theorem is proved.
\end{proof}
The above theorem implies Theorem \ref{main}

\end{document}